\newcommand\R{{\mathbb R}}
\newcommand\E{{\textnormal E}}
\newcommand\PP{{\textnormal P}}
\begin{document}

\title{Conjunction probability of smooth centered Gaussian processes}
\subtitle{}

\titlerunning{Conjunction probability of smooth centered Gaussian processes}        

\author{Viet-Hung Pham
}


\institute{\at
              Institute of Mathematics, Vietnam Academy of Science and Technology (VAST)\\
              18 Hoang Quoc Viet, 10307 Hanoi, Vietnam\\
              \email{pgviethung@gmail.com, pvhung@math.ac.vn}                
}

\maketitle

\begin{abstract}
In this paper we provide an upper bound for the conjunction probability of independent Gaussian smooth processes and then we prove that this bound is a good approximation with exponentially smaller error. Our result confirms the heuristic approximation by Euler characteristic method of Worsley and Friston and also implies the exact value of generalized Pickands constant in a special case. Some results for conjunction probability of correlated processes are also discussed.

\keywords{Conjunction probability, Gaussian processes, Pickands constant, Euler characteristic method, Rice formula. }
\subclass{60G15 \and 60G60 \and 62G09}

\end{abstract}
\section{Introduction}

In this paper, we investigate the conjunction probability of independent Gaussian processes, that is 

\begin{equation}\label{conju}
\PP \left(\underset{t\in [0,T]}{\sup} \underset{1\leq i \leq n}{\min} X_i(t) \geq u \right),
\end{equation}
where $u$ is a fixed threshold, and $X_i$'s are the independent smooth centered Gaussian processes with unit variance.

In a more general setting where $X_i$'s are random fields defined on $\R^d$, this problem has been addressed by Worsley and Friston in the seminal contribution \cite{MR2005444} with the statistical application to test whether the functional organization of the brain for language differs according to sex. With the same application to fRMI data, Alodat \cite{MR2775212} was interested in the distribution of the duration of the conjunction time. 

Most published papers \cite{MR3178344,MR3385594,MR2005444} assumed more that the processes $X_i$'s are stationary with the covariance functions $r_i(.), \, 1\leq i \leq n$ satisfying
$$r_i(t)=1-C_it^2+o(t^2)  \, \mbox{as} \, t\rightarrow 0, \; \mbox{and} \, r_i(t) <1, \, \forall t\in (0,T],$$
where $C_i$'s are positive constants. In this case, Debicki et al \cite{MR3178344} introduced the generalized Pickands constant 
$$H_{C_1,\ldots,C_n}=\underset{a\downarrow 0}{\lim} \frac{1}{a} \PP\left(\underset{k\geq 1}{\max} Z(ak) \leq 0\right),$$
where
$$Z(t)=\underset{1\leq i\leq n}{\min} \left( \sqrt{2}B_i(\sqrt{C_i}t)-C_i t^2+E_i\right),$$
with $B_i$'s being independent copies of a centered Gaussian process $B(t)$ with the covariance function $\textrm{Cov}(B(t),B(s))= |ts|,\; \forall t,s\geq 0$, and $E_i$'s being mutually independent unit mean exponential random variables and also independent of $B_i$'s. Using the double-sum method, they proved the asymptotic formula
$$\PP \left(\underset{t\in [0,T]}{\sup} \underset{1\leq i \leq n}{\min} X_i(t) >u \right)= H_{C_1,\ldots,C_n}T\frac{\varphi^n(u)}{u^{n-1}} (1+o(1)),$$
where $\varphi(.)$ is the density function of the standard normal distribution. However, one main disadvantage in statistical application of this result is the difficulty to estimate the exact value of the generalized Pickands constant $H_{C_1,\ldots,C_n}$.

 Worsley and Friston \cite{MR2005444} followed an heuristic argument that as the threshold $u$ is large enough, then the Euler characteristic $\chi(C_u)$ of the excursion set
$$C_u=\{t\in S:\; X_i(t) \geq u, \, \forall 1\leq i \leq n\}$$
just takes value 1 or 0 corresponding to the case $C_u$ is non-empty or empty. Under the same conditions on the stationary property of the processes, by Euler characteristic method, they considered the upper-triangular Toeplitz matrices $R_i$ corresponding to the process $X_i$ as
\begin{equation}\label{matrix}
R_i= \begin{pmatrix}
\overline{\Phi}(u) & \sqrt{C_i}\varphi(u)/\sqrt{2} \\
0 & \overline{\Phi}(u) 
\end{pmatrix},
\end{equation}
where  $\overline{\Phi}(.)$ is the tail distribution function of the standard normal distribution, and provided an heuristic and explicit approximation as

\begin{align}\label{ECH}
\PP \left(\underset{t\in [0,T]}{\sup} \underset{1\leq i \leq n}{\min} X_i(t) \geq u \right) & \approx \E (\chi(C_u))= (1,0)\left(\prod_{i=1}^n R_i\right)(1,T/\sqrt{\pi})^t\\
&=\overline{\Phi}^n(u)+\frac{\overline{\Phi}^{n-1}(u)\varphi(u)T}{\sqrt{2\pi}}\sum_{i=1}^n\sqrt{C_i}, \notag
\end{align}
where $(.)^t$ stands for the transpose of the vector.

However, they did not provide the validity of the above approximation. Therefore one does not know whether the approximation given by Euler characteristic method is nice or bad. It is worth to notice that the validity of Euler characteristic method is not obvious and trivial. For example, to study the tail distribution of the maximum of stationary Gaussian fields defined on the compact domain $S\subset \R^d$, this method is proven to be true for locally convex subset $S$ by Taylor, Akimichi and Adler \cite{MR2150192}; but it fails for non locally convex subsets (see \cite{MR3473099}). Note that once the validity is proven, then  the true value of the generalized Pickands constant is deduced immediately. 

In this paper, we will give an upper bound with two terms for the conjunction probability for every positive integer $n$. From the statistical point of view, a useful upper bound is better than an asymptotic formula. Furthermore, we will prove that our bound is sharp in the sense that the error is exponentially smaller.  As a consequence, our result confirms the validity Euler characteristic method and gives the explicit value of the generalized Pickands constants. The main theorem in this paper is stated as follows.

\begin{theorem}\label{thm1}
(a) Let $X_i, \, 1\leq i\leq n$ be $n$ independent centered Gaussian processes with continuously differentiable sample paths and unit variance. Then for any positive real number $u$,
$$\PP \left(\underset{t\in [0,T]}{\sup} \underset{1\leq i \leq n}{\min} X_i(t) \geq u \right) \leq \overline{\Phi}^n(u)+\frac{\overline{\Phi}^{n-1}(u)\varphi(u)}{\sqrt{2\pi}}\int_0^T \sum_{i=1}^n\sqrt{\mbox{Var}(X'_i(t))}dt.$$

(b) Assume more that for each $i=1,\ldots ,n$, the covariance function $r_i(s,t)$ is of class $\mathcal{C}^4$, that $|r_i(s,t)|<1$ for all $s\neq t$, and that $\left. \frac{\partial^2 r_i(s,t)}{\partial s \partial t}\right|_{s=t}=\mbox{Var}(X_i'(s))>0$ for all $s\in [0,T]$. Then there exists a positive constant $\delta$ such that
$$\PP \left(\underset{t\in [0,T]}{\sup} \underset{1\leq i \leq n}{\min} X_i(t) \geq u \right) = \overline{\Phi}^n(u)+\frac{\overline{\Phi}^{n-1}(u)\varphi(u)}{\sqrt{2\pi}}\int_0^T \sum_{i=1}^n\sqrt{\mbox{Var}(X'_i(t))}dt+O(\varphi(u(n+\delta))).$$
\end{theorem}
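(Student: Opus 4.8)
The plan is to realize the event as $\{\chi(C_u)\ge 1\}$ and to approximate it by $\E(\chi(C_u))$, the latter being easy to compute and the gap being governed by a two-point (double-sum) second-moment estimate.

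For part (a), write $Y(t)=\min_{1\le i\le n}X_i(t)$. Since $[0,T]$ is an interval, the Euler characteristic of $C_u=\{Y\ge u\}$ counts connected components, and each component either contains the endpoint $t=0$ or begins at an upcrossing of $u$ by $Y$ in $(0,T]$, so that
\[
\chi(C_u)=\mathbf 1\{Y(0)\ge u\}+N_u ,
\]
where $N_u$ is the number of upcrossings. From $\{\sup_{[0,T]}Y\ge u\}=\{\chi(C_u)\ge1\}$ and Markov's inequality, $\PP(\sup_{[0,T]}Y\ge u)\le \E(\chi(C_u))=\PP(Y(0)\ge u)+\E(N_u)$; independence and unit variance give $\PP(Y(0)\ge u)=\overline{\Phi}^{n}(u)$. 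For $\E(N_u)$ I would use three observations. First, an upcrossing of the minimum at $t$ occurs precisely when one coordinate $X_i$ upcrosses $u$ while every other coordinate exceeds $u$ (ties occur on a null set), so $N_u=\sum_i\#\{X_i\text{-upcrossings with }X_j(t)>u,\ j\ne i\}$ almost surely. Second, for a unit-variance process $X_i(t)$ and $X_i'(t)$ are uncorrelated, hence independent, so $\E[(X_i'(t))^+\mid X_i(t)=u]=\sqrt{\mathrm{Var}(X_i'(t))}/\sqrt{2\pi}$. Third, independence across $i$ turns the side conditions $X_j(t)>u$ into a factor $\overline{\Phi}^{n-1}(u)$. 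The Kac--Rice formula (valid here, under the $\mathcal C^1$ hypothesis, at least as the required upper bound) then yields exactly the second term of the stated bound.

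For part (b) the error is the Markov defect. Since $(\chi(C_u)-1)^+\le\binom{\chi(C_u)}{2}$ and $\chi(C_u)=\mathbf 1\{Y(0)\ge u\}+N_u$,
\[
\E(\chi(C_u))-\PP\Big(\sup_{[0,T]}Y\ge u\Big)=\E\big[(\chi(C_u)-1)^+\big]\le \tfrac12\,\E\big[\chi(C_u)(\chi(C_u)-1)\big],
\]
and a direct expansion gives $\tfrac12\E[\chi(C_u)(\chi(C_u)-1)]=\E[\mathbf 1\{Y(0)\ge u\}\,N_u]+\tfrac12\E[N_u(N_u-1)]$. So it suffices to show that the cross term and the second factorial moment are both $O(\varphi(u(n+\delta)))$. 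Both are expressed, via the two-point Kac--Rice formula, as double integrals over $(s,t)\in[0,T]^2$ of a two-point intensity which, for each pairing of the two crossing coordinates, factorizes (by independence across $i$) into a product over the $n$ processes of bivariate Gaussian densities and tails at level $u$. I would then split $[0,T]^2$ into the off-diagonal region $|s-t|\ge\varepsilon$ and the near-diagonal region $|s-t|<\varepsilon$. On the off-diagonal region the estimate is clean: two separated excursions force every $X_i$ to be $\ge u$ at both $s$ and $t$, and the bivariate tail contributes the rate $u^2\sum_{i=1}^n(1+r_i(s,t))^{-1}$. Since $|r_i(s,t)|<1$ for $s\ne t$ and the $r_i$ are continuous on the compact set $\{|s-t|\ge\varepsilon\}$, we have $\rho:=\max_i\sup_{|s-t|\ge\varepsilon}|r_i(s,t)|<1$, so this rate is at least $\tfrac{n}{1+\rho}u^2=\tfrac12(n+\delta_0)u^2$ with $\delta_0=n\frac{1-\rho}{1+\rho}>0$; this is exponentially smaller than the leading rate $\tfrac n2 u^2$, and the endpoint cross term is controlled identically.

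The near-diagonal region is the main obstacle, because there $r_i\to1$ and no gain comes from decorrelation; the gain must come from local geometry. Two crossings within a short separation $\tau=|s-t|$ cannot both be upcrossings to leading order: conditioning a coordinate $X_i$ on the two level-$u$ values $X_i(s)=X_i(t)=u$ and Taylor-expanding (this is exactly where the $\mathcal C^4$ covariance and the nondegeneracy $\mathrm{Var}(X_i'(t))>0$ enter) forces the two derivatives to be of order $\tau$ with opposite signs, namely $\mp\tfrac12 X_i''\,\tau$. Hence the conditional mean of the later derivative carries the wrong sign and is of order $u\tau$, while its conditional standard deviation is only of order $\tau$, so their ratio is of order $u$ uniformly in $\tau$; requiring that second crossing to be an upcrossing therefore contributes a conditional probability $\asymp\exp(-c\,u^2)$ uniformly in $\tau$. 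Integrating the near-diagonal two-point density against this uniform exponential factor upgrades the near-diagonal rate from $\tfrac n2 u^2$ to $\tfrac12(n+\delta_1)u^2$ for some $\delta_1>0$. Taking $\delta=\min(\delta_0,\delta_1)$ then yields the error $O(\varphi(u(n+\delta)))$. I expect the delicate bookkeeping of the conditional Kac--Rice densities near the diagonal --- controlling the blow-up of the two-point density against the vanishing of the derivative factors uniformly in $u$ --- to be the technically demanding step.
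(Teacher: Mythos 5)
Your part (a) is essentially the paper's argument in Euler-characteristic clothing: the identity $\chi(C_u)=\mathbf 1\{Y(0)\ge u\}+N_u$ with $N_u=\sum_i U_{i,u}$, where $U_{i,u}$ counts the points at which $X_i$ upcrosses $u$ while the other coordinates exceed $u$, is exactly the paper's decomposition via the boundary-touching count $U^*_u$, and the Rice computation of $\E(U_{i,u})$ is identical. For part (b) your bookkeeping $\E(\chi)-\PP(\chi\ge 1)=\E[(\chi-1)^+]\le \E[\mathbf 1\{Y(0)\ge u\}N_u]+\tfrac12\E[N_u(N_u-1)]$ is a clean (slightly stronger) form of the paper's Bonferroni step, and the terms to control match the paper's three terms. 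Where you genuinely diverge is in how you propose to control them: you want a two-point Kac--Rice double integral with an off-diagonal/near-diagonal split (a double-sum argument), whereas the paper never writes a two-point integral for the mixed terms at all. It instead argues pathwise that each bad configuration forces either two crossings of a \emph{single} process $X_j$ (so that $U_j>1$ or $D_j>1$, handled by the known one-dimensional Lemma~\ref{le1}(b)) or the event $X_j(0)+X_j(T)\ge 2u$ (handled by $|r_j(0,T)|<1$), and then multiplies by $\prod_{k}\PP(\max X_k>u)$ using independence. That reduction is what makes the paper's proof short; your route is viable but re-proves a near-diagonal estimate that the paper imports as a black box.

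The concrete gap is in your near-diagonal analysis of the mixed term $\E[U_{i,u}U_{j,u}]$, $i\ne j$. Your stated mechanism --- conditioning one coordinate on two level-$u$ values $X_i(s)=X_i(t)=u$ and getting opposite-sign derivatives of order $\tau$ --- only applies to the same-coordinate term $\E[U_{i,u}(U_{i,u}-1)]$. In the mixed term the two level constraints sit on \emph{independent} processes ($X_i(s)=u$ and $X_j(t)=u$), so as $s\to t$ the value constraints alone give exactly the rate $\tfrac n2u^2$ and no gain: the exponential saving must come instead from the side condition. For $s<t$ close together, $X_j(s)>u$, $X_j(t)=u$, $X_j'(t)\ge 0$ forces $0\le X_j'(t)\lesssim \tau\sup X_j''$ with $X_j''$ positive somewhere near a point where $X_j=u$; since $\E[X_j''(t)\mid X_j(t)=u]=-\mathrm{Var}(X_j'(t))\,u$, this costs an extra factor $O(e^{-cu^2})$ uniformly in $\tau$ --- equivalently, it forces an extra crossing of level $u$ by $X_j$ in a short interval, which is exactly what the paper exploits globally through the bound $\PP(\{U_{i,u}>0\}\cap\{U_{j,u}>0\})\le \PP(\max X_i>u)\bigl[\PP(U_j>1)+\PP(D_j>1)+\PP(X_j(0)+X_j(T)\ge 2u)\bigr]$. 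You need to add this second local mechanism (and its analogue for the endpoint cross term $\E[\mathbf 1\{Y(0)\ge u\}U_{i,u}]$ near $t=0$), or simply adopt the paper's pathwise reduction. A minor further remark: both your argument and the paper's actually deliver an error of order $e^{-(n+\delta)u^2/2}$, i.e.\ $\varphi^{n+\delta}(u)$ up to constants, which is the meaningful sense in which the error is ``exponentially smaller'' than the leading term $\overline{\Phi}^{n-1}(u)\varphi(u)$.
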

The main tool is the Rice formula to calculate the expectation of the number of "up-crossings". The detailed proof of the main theorem is presented in Section 2. In Section 3, we will apply the method to the conjunction probability of correlated processes.

\section{Proof of main theorem and discussions}

Before proving the main theorem, let us state some technical lemmas. The first lemma is a well-known result on the distribution of the maximum of Gaussian process (see \cite[Proposition 4.1]{MR2478201} or \cite{MR1361884}).
\begin{lemma}\label{le1}
(a) Let $\{X(t), t\in [0,T]\}$ be a centered Gaussian process with continuously differentiable sample paths and unit variance. Then for any positive real number $u$,
$$\PP \left(\underset{t\in [0,T]}{\max}  X(t) \geq u \right) \leq \overline{\Phi}(u)+\frac{\varphi(u)}{\sqrt{2\pi}}\int_0^T \sqrt{\mbox{Var}(X'(t))}dt.$$

(b) Assume more that the covariance function $r_X(s,t)$ is of class $\mathcal{C}^4$, that $|r_X(s,t)|<1$ for all $s\neq t$, and that $\mbox{Var}(X'(s))>0$ for all $s\in [0,T]$. Then there exists a positive constant $\delta$ such that
$$\PP(D_u>1)\leq  \E (D_u(D_u-1))/2 =O(\varphi(u(1+\delta))),$$
and
$$\PP(U_u>1)\leq  \E (U_u(U_u-1))/2 =O(\varphi(u(1+\delta))),$$
where $D_u$ ($U_u$) stands for the number of $u$-"down-crossings" ("up-crossings") as
$$D_u=\mbox{card}\{t\in (0,T): X(t)=u, \, X'(t) \leq 0\},$$
and
$$U_u=\mbox{card}\{t\in (0,T): X(t)=u, \, X'(t) \geq 0\},$$
\end{lemma}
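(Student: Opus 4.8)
The plan is to derive both parts from the expected number of level-$u$ up-crossings and its second factorial moment, evaluated by the Rice formula. For part (a), I would first record the elementary inclusion
$$\left\{\max_{t\in[0,T]}X(t)\geq u\right\}\subseteq\{X(0)\geq u\}\cup\{U_u\geq 1\}:$$
if the maximum reaches $u$ while $X(0)<u$, then $t_0:=\inf\{t:X(t)\geq u\}$ satisfies $X(t_0)=u$ by continuity and $X'(t_0)\geq 0$ by differentiability (the left difference quotients at $t_0$ are nonnegative), so $t_0$ is a $u$-up-crossing. Markov's inequality then gives $\PP(\max X\geq u)\leq\overline{\Phi}(u)+\E U_u$. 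Next I would evaluate $\E U_u$ by the Rice formula
$$\E U_u=\int_0^T \E\big[(X'(t))^+\,\big|\,X(t)=u\big]\,p_{X(t)}(u)\,dt.$$
Here $p_{X(t)}(u)=\varphi(u)$ by unit variance, and since $\mbox{Cov}(X(t),X'(t))=\tfrac12\tfrac{d}{dt}\mbox{Var}(X(t))=0$ the jointly Gaussian pair $(X(t),X'(t))$ is independent, so the conditional expectation is just $\E[(X'(t))^+]=\sqrt{\mbox{Var}(X'(t))}/\sqrt{2\pi}$, using $\E[\xi^+]=\sigma/\sqrt{2\pi}$ for a centered Gaussian $\xi$ of variance $\sigma^2$. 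Substituting gives exactly the claimed bound.

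For part (b) I would start from the Rice formula for the second factorial moment,
$$\E\big(U_u(U_u-1)\big)=\int_0^T\!\!\int_0^T \E\big[(X'(s))^+(X'(t))^+\,\big|\,X(s)=X(t)=u\big]\,p_{s,t}(u,u)\,ds\,dt,$$
where $p_{s,t}$ is the joint density of $(X(s),X(t))$; the $\mathcal{C}^4$ regularity and the non-degeneracy $\mbox{Var}(X'(t))>0$ guarantee this formula is valid and the integrand finite for $s\neq t$. Since $(X(s),X(t))$ has unit variances and correlation $\rho=r(s,t)$, one has $p_{s,t}(u,u)=(2\pi\sqrt{1-\rho^2})^{-1}\exp(-u^2/(1+\rho))$, so the whole difficulty lies in controlling the exponent $u^2/(1+\rho)$, which is close to $u^2/2$ precisely where $\rho\to 1$. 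I would split $[0,T]^2$ accordingly. On the far region $\{|s-t|\geq\epsilon\}$, compactness together with $|r|<1$ gives $\sup\rho=1-\eta<1$, whence $u^2/(1+\rho)\geq\tfrac{u^2}{2}(1+\delta_1)$ for some $\delta_1>0$; as the conditional expectation of the product of derivatives grows at most polynomially in $u$, this region contributes $O(\varphi(u(1+\delta_1)))$.

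The main obstacle is the near-diagonal region $\{|s-t|<\epsilon\}$, where the density blows up like $(1-\rho^2)^{-1/2}\asymp|t-s|^{-1}$ and the exponent supplies no extra decay; here I would extract the decay from the conditional law of the derivatives. Writing $t=s+h$ and using the $\mathcal{C}^4$ expansion of $r$, a short computation of the regression of $(X'(s),X'(t))$ on $(X(s),X(t))$ shows the conditional means are $\mbox{Var}(X'(s))\,h\,u/2+O(h^2u)$ and its negative, i.e. of order $|t-s|\,u$ with \emph{opposite} signs, reflecting that two close points at level $u$ typically bound a single excursion carrying one up- and one down-crossing. Since the conditional standard deviations are only of order $|t-s|$ (independent of $u$), forcing both derivatives to be nonnegative, as $(X'(s))^+(X'(t))^+$ requires, costs a conditional probability that is exponentially small in $u^2$; this factor, balanced against the integrable $|t-s|^{-1}$ blow-up of $p_{s,t}(u,u)$, makes the near-diagonal contribution $O(\varphi(u(1+\delta_2)))$ for some $\delta_2>0$. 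Taking $\delta=\min(\delta_1,\delta_2)$ (after adjusting constants) yields $\E(U_u(U_u-1))=O(\varphi(u(1+\delta)))$, and $\PP(U_u>1)\leq\E(U_u(U_u-1))/2$ follows from $\mathbf{1}\{k\geq 2\}\leq k(k-1)/2$ for nonnegative integers $k$; the estimate for $D_u$ is identical with $(X')^+$ replaced by $(X')^-$.
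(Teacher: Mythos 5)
The paper does not prove this lemma at all --- it quotes it as a known result, citing Proposition 4.1 of Aza\"{i}s--Wschebor \cite{MR2478201} and Piterbarg \cite{MR1361884} --- so there is no internal proof to compare against; judged on its own merits, your reconstruction is correct and is essentially the argument of those references. Part (a) (the inclusion $\{\max_{[0,T]}X\geq u\}\subseteq\{X(0)\geq u\}\cup\{U_u\geq 1\}$, valid up to the null event $\{X(T)=u\}$, then Markov's inequality, the Rice formula, and independence of $X(t)$ and $X'(t)$ forced by the constant variance) is exactly the classical Rice/Davies bound. Part (b) is the classical second-factorial-moment estimate, and you have isolated the correct mechanism: off the diagonal, $|r_X|<1$ plus compactness give $p_{s,t}(u,u)\leq C e^{-(1+\delta_1)u^2/2}$ against an at most polynomially growing conditional expectation; near the diagonal, with $t=s+h$, the conditional means of $X'(s)$ and $X'(t)$ given $X(s)=X(t)=u$ are $\approx \pm\,\mbox{Var}(X'(s))\,hu/2$ while the conditional standard deviations are $O(h)$, so requiring $X'(t)\geq 0$ costs $\overline{\Phi}(cu)$ \emph{uniformly in} $h$ --- precisely where the $\mathcal{C}^4$ smoothness and $\mbox{Var}(X'(s))>0$ enter. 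Two small imprecisions, neither fatal: (i) $|t-s|^{-1}$ is \emph{not} integrable on its own in dimension one; what makes the near-diagonal integral converge is that the conditional moments of $(X'(s))^+(X'(t))^+$ are $O(h^2\cdot\mathrm{poly}(u))$ (means $O(hu)$, spreads $O(h)$, combined e.g.\ via Cauchy--Schwarz), which cancels the $h^{-1}$ from the density with a factor $h$ to spare, and the exponential factor then upgrades $e^{-u^2/2}$ to $e^{-(1+\delta_2)u^2/2}$; (ii) the validity of the two-point Rice formula rests on the non-degeneracy of $(X(s),X(t))$ for $s\neq t$, i.e.\ on the hypothesis $|r_X(s,t)|<1$, rather than on $\mbox{Var}(X'(t))>0$; the latter (with compactness of $[0,T]$) is what gives the uniform lower bound on the conditional-mean coefficient.
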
 
The second lemma states that there is no chance to see that both processes take the given values at a same point.
\begin{lemma} \label{le2}
Let $X_1(t)$ and $X_2(t)$ be two independent Gaussian processes with continuously differentiable sample paths. Then for a given $u$,
$$\PP(\exists t\in [0,T]:\, X_1(t)=X_2(t)=u)=0.$$
\end{lemma}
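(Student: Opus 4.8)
The plan is to prove the set $\{t\in[0,T]:X_1(t)=X_2(t)=u\}$ is almost surely empty by covering $[0,T]$ with increasingly fine partitions and using independence to make the event of a common $u$-crossing factorize into a product of two individually small crossing probabilities.

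First I would fix an integer $n$ and partition $[0,T]$ into the $n$ closed subintervals $I_{n,j}=[(j-1)T/n,\,jT/n]$, $j=1,\dots,n$. For $i\in\{1,2\}$ set $A_{i,j}=\{\exists\,t\in I_{n,j}:X_i(t)=u\}$. The crucial set-theoretic observation is that if some $t^\ast$ satisfies $X_1(t^\ast)=X_2(t^\ast)=u$, then $t^\ast\in I_{n,j}$ for some $j$, whence both $A_{1,j}$ and $A_{2,j}$ occur. Therefore
$$\{\exists\,t\in[0,T]:X_1(t)=X_2(t)=u\}\subseteq\bigcup_{j=1}^n\bigl(A_{1,j}\cap A_{2,j}\bigr).$$
Since $X_1$ and $X_2$ are independent and $A_{1,j}$ (resp.\ $A_{2,j}$) is measurable with respect to $X_1$ (resp.\ $X_2$), the union bound together with independence gives
$$\PP\bigl(\exists\,t:X_1(t)=X_2(t)=u\bigr)\le\sum_{j=1}^n\PP(A_{1,j})\,\PP(A_{2,j}).$$

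Next I would bound each single-process crossing probability. Writing $N_i(I)=\mathrm{card}\{t\in I:X_i(t)=u\}$, Markov's inequality yields $\PP(A_{i,j})=\PP(N_i(I_{n,j})\ge1)\le\E\,N_i(I_{n,j})$, and the Rice formula expresses $\E\,N_i(I_{n,j})=\int_{I_{n,j}}\E\bigl[\,|X_i'(t)|\bigm|X_i(t)=u\bigr]f_{X_i(t)}(u)\,dt$, where $f_{X_i(t)}$ is the density of $X_i(t)$. Because the sample paths are continuously differentiable and the variances $\mathrm{Var}(X_i(t))$ and $\mathrm{Var}(X_i'(t))$ are continuous and non-degenerate on the compact interval $[0,T]$, this Rice intensity is bounded, say by a constant $M$, uniformly in $t$. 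Hence $\PP(A_{i,j})\le M\,T/n$ for all $i$ and $j$.

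Combining the two displays,
$$\PP\bigl(\exists\,t:X_1(t)=X_2(t)=u\bigr)\le\sum_{j=1}^n\Bigl(\frac{MT}{n}\Bigr)^2=\frac{M^2T^2}{n},$$
and letting $n\to\infty$ forces the left-hand side, which does not depend on $n$, to vanish. The main obstacle is the uniform estimate $\PP(A_{i,j})\le MT/n$: the whole argument hinges on the per-interval crossing probability being $O(1/n)$ rather than $O(1)$, which is exactly the gain supplied by the Rice formula and the boundedness of the crossing intensity, so that the quadratic factor $(T/n)^2$ produced by independence beats the number $n$ of intervals. (Alternatively, one can bypass the partition by applying a marked Rice formula to the difference $Y=X_1-X_2$: the expected number of $t$ with $Y(t)=0$ and $X_1(t)=u$ equals $\int_0^T\E[\,|Y'(t)|\,\mathbf 1\{X_1(t)=u\}\mid Y(t)=0]\,f_{Y(t)}(0)\,dt$, and this vanishes because, conditionally on $Y(t)=0$, the variable $X_1(t)$ is still a non-degenerate Gaussian, so $\PP(X_1(t)=u\mid Y(t)=0)=0$.)
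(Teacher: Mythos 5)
Your proof is correct, but it takes a genuinely different route from the paper's. The paper's argument is a single weighted Rice formula on all of $[0,T]$: it bounds the probability by the expected number of points $t$ with $X_1(t)=u$ and $|X_2(t)-u|\le\epsilon$, writes this expectation via the Rice formula with the mark $\mathbb{I}_{\{|X_2(t)-u|\le\epsilon\}}$, uses independence to pull the factor $\PP(|X_2(t)-u|\le\epsilon)$ out of the integral, and lets $\epsilon\to 0$. You instead discretize: a common solution of $X_1(t)=X_2(t)=u$ forces both processes to hit the level $u$ in the same subinterval of a partition, the union bound plus independence \emph{of events} gives $\sum_j\PP(A_{1,j})\PP(A_{2,j})$, and the quadratic gain $(MT/n)^2$ from the per-interval Rice bound beats the $n$ terms of the sum. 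Your version needs only the unweighted one-process estimate $\E\,N_i(I)\le M\,|I|$ rather than a Rice formula with an indicator weight, so it is marginally more elementary, at the cost of the partition bookkeeping; the paper's is shorter. Both arguments rest on the same implicit non-degeneracy (the marginals $X_i(t)$ must have densities bounded at $u$ --- guaranteed by the unit-variance standing assumption of the paper, without which the lemma as literally stated can fail --- and the crossing intensity must be bounded on $[0,T]$); note that the non-degeneracy of $\mbox{Var}(X_i'(t))$ that you invoke is not actually needed for the \emph{upper} bound on the Rice intensity. Your parenthetical alternative via $Y=X_1-X_2$ is essentially the paper's marked-Rice idea with the roles of the level and the mark exchanged, and is equally valid.
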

\begin{proof}
It is clear that for each positive $\epsilon$,
\begin{align*}
\PP(\exists t\in [0,T]:\, X_1(t)=X_2(t)=u)& \leq \PP(\exists t\in [0,T]:\, X_1(t)=u \, \mbox{and} \, |X_2(t)-u|\leq \epsilon)\\
&\leq  \E \left(\mbox{card}\{t\in [0,T]:\, X_1(t)=u \, \mbox{and} \, |X_2(t)-u|\leq \epsilon\}\right).
\end{align*}
By the Rice formula (see \cite{MR2478201}), the above expectation is equal to 
\begin{align*}
&\int_0^T \E(|X'_1(t)|\mathbb{I}_{\{|X_2(t)-u|\leq \epsilon\}}\mid X_1(t)=u)p_{X_1(t)}(u)dt\\
=&\PP(|X_2(t)-u|\leq \epsilon)\int_0^T \E(|X'_1(t)|\mid X_1(t)=u)p_{X_1(t)}(u)dt,
\end{align*}
where $p_{X_1(t)}(.)$ is the density function of the random variable $X_1(t)$. 

Let $\epsilon$ tend to 0, the result follows.
\end{proof}

\subsection{Proof of part (a): Upper bound}
  It is clear that
\begin{align*}
&\PP \left(\underset{t\in [0,T]}{\sup} \underset{1\leq i \leq n}{\min} X_i(t) \geq u\right)\\
=&\PP\left(X_i(0)\geq u, \forall i\right)+\PP\left(\{\exists i:\, X_i(0)<u\}\cap \left\{\underset{t\in [0,T]}{\sup} \underset{1\leq i \leq n}{\min} X_i(t) \geq u\right\}\right).
\end{align*}
Since the $n$-dimensional curve $(X_1(t),\ldots,X_n(t))$ is continuous, then under the condition $\{\exists i:\, X_i(0)<u\}\cap \left\{\underset{t\in [0,T]}{\sup} \underset{1\leq i \leq n}{\min} X_i(t) \geq u\right\}$ (it means that we start from a point outside and go inside the domain $\{(x_1,\ldots ,x_n) \in \R^n:\; x_i\geq u,  \forall i\}$ ), there exists at least one point $t\in [0,T]$ such that the curve touches the boundary of the domain, i.e.
$$\{\exists i\in\{1,\ldots,n\}: \; X_i(t)=u,X'_i(t)\geq 0, \,\mbox{and} \, X_j(t)\geq u ,\forall j\neq i\}. $$
Denote $U^*_u$ by the number of points satisfying the above condition. For each $i=1,\ldots,n$, denote $U_{i,u}$ by
$$U_{i,u}=\mbox{card}\{t\in [0,T]:\, X_i(t)=u,X'_i(t)\geq 0, \,\mbox{and} \, X_j(t)> u ,\forall j\neq i\}.$$
Thanks to Lemma \ref{le2},
\begin{equation}\label{bonfe}
\displaystyle \PP(U^*_u>0)=\PP(\underset{i=1}{\overset{n}{\cup}}\{U_{i,u}>0\}).
\end{equation}

 Then we have 
\begin{align*}
\PP \left(\underset{t\in [0,T]}{\sup} \underset{1\leq i \leq n}{\min} X_i(t) \geq u\right) & \leq  \overline{\Phi}^n(u)+\PP(U^*_u>0)\\
& \leq   \overline{\Phi}^n(u)+\sum_{i=1}^n\PP(U_{i,u}>0)\leq \overline{\Phi}^n(u)+\sum_{i=1}^n\E(U_{i,u})
\end{align*}
By the Rice formula, we have for each $i=1,\ldots,n$, 
\begin{align*}
E(U_{i,u})=& \int_0^T \E(\max\{X'_1(t),0\}  \prod_{j=1, j\neq i}^n\mathbb{I}_{\{X_j(t)>u\}}\mid X_i(t)=u)p_{X_i(t)}(u)dt\\
=& \prod_{j=1, j\neq i}^n\PP(X_j(t)>u) \int_0^T \E(\max\{X'_1(t),0\} )p_{X_i(t)}(u)dt\\
=& \frac{\overline{\Phi}^{n-1}(u)\varphi(u)}{\sqrt{2\pi}}\int_0^T \sqrt{\mbox{Var}(X'_i(t))}dt,
\end{align*}
here we use the fact that the processes $X_j$'s are independent and $X'_i(t)$ is independent of $X_i(t)$.

Summing up the expectations $E(U_{i,u})$'s, we have the upper bound
$$\PP \left(\underset{t\in [0,T]}{\sup} \underset{1\leq i \leq n}{\min} X_i(t) \geq u \right) \leq \overline{\Phi}^n(u)+\frac{\overline{\Phi}^{n-1}(u)\varphi(u)}{\sqrt{2\pi}}\int_0^T \sum_{i=1}^n\sqrt{\mbox{Var}(X'_i(t))}dt.$$

\subsection{Proof of part (b): Good approximation}
 To prove the sharpness of the upper bound, we first notice that
\begin{align*}
&\PP \left(\underset{t\in [0,T]}{\sup} \underset{1\leq i \leq n}{\min} X_i(t) \geq u\right)=\overline{\Phi}^n(u)+\PP(U^*_{u}>0)-\PP(\{X_j(0)>u, \forall j\} \cap \{U^*_u>0 \})\\
= & \overline{\Phi}^n(u)+\PP(\underset{i=1}{\overset{n}{\cup}}\{U_{i,u}>0\})-\PP(\{X_j(0)>u, \forall j\} \cap (\underset{i=1}{\overset{n}{\cup}}\{U_{i,u}>0\}))\\
\geq & \overline{\Phi}^n(u)+\sum_{i=1}^n\PP(U_{i,u}>0) - \sum_{i\neq j} \PP(\{U_{i,u}>0\}\cap \{U_{j,u}>0\})-\PP(X_j(0)>u, \forall j, \, U^*_u>0 )\\
\geq &  \overline{\Phi}^n(u)+\sum_{i=1}^n\left(\E(U_{i,u}) -\frac{\E[U_{i,u}(U_{i,u}-1)]}{2}\right) - \sum_{i\neq j} \PP(\{U_{i,u}>0\}\cap \{U_{j,u}>0\})\\
& \quad \quad \quad -\sum_{i=1}^n\PP(\{X_j(0)>u, \forall j\}\cap\{ U_{i,u}>0\} ),
\end{align*}
where the third line follows form Bonferroni inequality and the last line follows from the fact that the random variable $U_{i,u}$ takes integer values.

Then the result follows immediately if we can show that for  $i \neq j$, three terms $\E[U_{i,u}(U_{i,u}-1)]$, $\PP(\{U_{i,u}>0\}\cap \{U_{j,u}>0\})$ and $\PP(\{X_j(0)>u, \forall j\}\cap\{ U_{i,u}>0\} )$ are $O(\varphi(u(n+\delta)))$ for some positive $\delta$.

$\bullet$ For the first term, it is clear that
$$U_{i,u}(U_{i,u}-1)\leq U_i(U_i-1)\prod_{j=1,j\neq i}^n \mathbb{I}_{\left\{\underset{t\in [0,T]}{\max} X_j(t) >u\right\}},$$
where $U_i$ is the usual number of $u$-up-crossings with respect only to $X_i(t)$, i.e.
$$U_i=\mbox{card}\{t\in (0,T): X_i(t)=u, \, X_i'(t) \geq 0\}.$$
Thanks to Lemma \ref{le1}, we have
\begin{align*}
\E[U_{i,u}(U_{i,u}-1)] &\leq \E\left[ U_i(U_i-1)\prod_{j=1,j\neq i}^n \mathbb{I}_{\left\{\underset{t\in [0,T]}{\sup} X_j(t) >u\right\}}\right]\\
&= \E[U_{i}(U_{i}-1)] \prod_{j=1,j\neq i}^n \PP\left( \underset{t\in [0,T]}{\max} X_j(t) >u\right)\\
&\leq O(\varphi(u(1+\delta_i))) \prod_{j=1,j\neq i}^n  \left( \overline{\Phi}(u)+\frac{\varphi(u)}{\sqrt{2\pi}}\int_0^T \sqrt{\mbox{Var}(X'_j(t))}dt\right)\\
&=O(\varphi(u(n+\delta))),
\end{align*}
where $\delta_i$ is introduced as in Lemma \ref{le1}(b) and $\delta$ is a sufficiently small enough positive constant.

$\bullet$ For the third term, 
\begin{align*}
&\PP(X_j(0)>u, \forall j, \, U_{i,u}>0 ) \\
 \leq &\PP(X_j(0)>u, \forall j\neq i) \PP(X_i(0)>u,U_i>0)\\
\leq &\overline{\Phi}^{n-1}(u) [\PP(X_i(0)>u,X_i(T)>u)+\PP(X_i(0)>u,X_i(T)<u,U_i>0)]\\
\leq &\overline{\Phi}^{n-1}(u) [\PP(X_i(0)+X_i(T)>2u)+\PP(D_i>1)],
\end{align*}
where  $D_i$ is the usual number of $u$-down-crossings with respect only to $X_i(t)$, i.e.
$$D_i=\mbox{card}\{t\in (0,T): X_i(t)=u, \, X_i'(t) \leq 0\}.$$
Since $X_i(0)+X_i(T)$ is a centered Gaussian random variable with variance strictly less than $4$, then for some positive $\delta$,
$$\PP(X_i(0)+X_i(T)>2u)=O(\varphi(u(1+\delta))).$$
Again from Lemma \ref{le1}(b), we have
$$\PP(D_i>1)=O(\varphi(u(1+\delta_i))).$$
So we can do similarly as for the first term to obtain a negligible upper bound for the third term.

$\bullet$ For the second term, thanks to Lemma \ref{le2}, the probability that both $X_i(t)$ and $X_j(t)$ are equal to $u$ simultaneously at a common point $t$ is 0, then
\begin{align*}
 &\PP(\{U_{i,u}>0\}\cap \{U_{j,u}>0\}) \leq \PP \left( \prod_{k \neq i,j} \mathbb{I}_{\left\{\underset{t\in [0,T]}{\max} X_k(t) >u\right\}}\right) \times\\
  & \times \bigg[\PP(\exists t_i<t_j:\, X_i(t_i)=X_j(t_j)=u, X'_i(t_i)\geq 0, X'_j(t_j)\geq 0, X_i(t_j)>u, X_j(t_i)>u) \\\
 &+ \PP(\exists t_i>t_j:\, X_i(t_i)=X_j(t_j)=u, X'_i(t_i)\geq 0, X'_j(t_j)\geq 0, X_i(t_j)>u, X_j(t_i)>u) \bigg].\\
\end{align*}
We just deal with the first case $t_i<t_j$, the rest case is similar. In this case, we have
\begin{align*}
&\PP(\exists t_i<t_j:\, X_i(t_i)=X_j(t_j)=u, X'_i(t_i)\geq 0, X'_j(t_j)\geq 0, X_i(t_j)>u, X_j(t_i)>u)\\
\leq & \PP(\underset{t\in [0,T]}{\max} X_i(t) >u) \bigg[ \PP(\{X_j(0)<u\}\cap\{\exists t_i<t_j: X_j(t_i)>u,X_j(t_j)=u,X'_j(t_j)\geq0\})\\
&\quad\quad\quad\quad\quad\quad\quad+ \PP(\{X_j(T)<u\}\cap\{\exists t_i<t_j: X_j(t_i)>u,X_j(t_j)=u,X'_j(t_j)\geq0\})\\
&\quad\quad\quad\quad\quad\quad\quad+\PP(X_j(0)\geq u,X_j(T)\geq u) \bigg]\\
\leq &  \PP(\underset{t\in [0,T]}{\max} X_i(t) >u) \bigg[\PP(U_j>1)+\PP(D_j>1)+\PP(X_j(0)+X_j(T)\geq 2u)  \bigg],
\end{align*}
where $U_j$ (and $D_j$) is the usual number of $u$-upcrossings (downcrossings) with respect only to $X_j(t)$ as defined above. 

Then we can apply the same arguments as for two terms above and complete the proof of the main theorem.
\subsection{Discussions}

Remark that our result is general in the sense that we do not require the stationary assumption as in \cite{MR3385594,MR2005444}. Under this additional condition, we have the following corollary.
\begin{corollary} Let $X_i, \, 1\leq i\leq n$ be $n$ independent stationary centered Gaussian processes with continuously differentiable sample paths and covariance functions $r_i(.), \, 1\leq i \leq n$ that satisfy
$$r_i(t)=1-C_it^2+o(t^2)  \, \mbox{as} \, t\rightarrow 0, \; \mbox{and} \, r_i(t) <1, \, \forall t\in (0,T],$$
where $C_i$'s are positive constants. Then for any positive real number $u$,
$$\PP \left(\underset{t\in [0,T]}{\sup} \underset{1\leq i \leq n}{\min} X_i(t) \geq u \right) \leq \overline{\Phi}^n(u)+\frac{\overline{\Phi}^{n-1}(u)\varphi(u)T}{\sqrt{2\pi}} \sum_{i=1}^n\sqrt{C_i}.$$
Furthermore, there exists a positive constant $\delta$ such that
$$\PP \left(\underset{t\in [0,T]}{\sup} \underset{1\leq i \leq n}{\min} X_i(t) \geq u \right) = \overline{\Phi}^n(u)+\frac{\overline{\Phi}^{n-1}(u)\varphi(u)T}{\sqrt{2\pi}} \sum_{i=1}^n\sqrt{C_i}+O(\varphi(u(n+\delta))).$$
\end{corollary}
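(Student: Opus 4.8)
The plan is to read the corollary off Theorem~\ref{thm1} by specializing to the stationary case; the only genuine simplification is that the integrand $\sqrt{\mathrm{Var}(X_i'(t))}$ no longer depends on $t$, so that the integral $\int_0^T(\cdots)\,dt$ collapses to a factor of $T$. First I would check that the stationary hypotheses supply everything Theorem~\ref{thm1} asks for. Unit variance is immediate from $r_i(0)=1$, and continuously differentiable sample paths are assumed outright, so part (a) applies verbatim. For the sharper ``furthermore'' statement I would invoke part (b), whose hypotheses must be matched: in the stationary setting $r_i(s,t)=r_i(s-t)$, so requiring the one-variable covariance to be of class $\mathcal{C}^4$ gives $r_i(s,t)\in\mathcal{C}^4$; the off-diagonal non-degeneracy $|r_i(s,t)|<1$ for $s\neq t$ reduces, using the evenness of $r_i$, to $|r_i(\tau)|<1$ for $0<|\tau|\leq T$, which follows from the stated $r_i(\tau)<1$ once the degenerate value $r_i(\tau)=-1$ is ruled out; and $\mathrm{Var}(X_i'(t))>0$ follows from $C_i>0$.

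The key computation is the value of the now-constant integrand. By stationarity the mean-square derivative has variance $\mathrm{Var}(X_i'(t))=-r_i''(0)$ for every $t\in[0,T]$, a constant that the expansion $r_i(\tau)=1-C_i\tau^2+o(\tau^2)$ determines in terms of $C_i$ (equivalently, $C_i$ is, up to a universal factor, the second spectral moment of $X_i$). Substituting this constant into the bound of Theorem~\ref{thm1}(a) turns $\int_0^T\sum_{i=1}^n\sqrt{\mathrm{Var}(X_i'(t))}\,dt$ into $T\sum_{i=1}^n\sqrt{\mathrm{Var}(X_i'(t))}$, which after inserting the expansion coefficient is precisely the second term of the corollary; the same substitution in Theorem~\ref{thm1}(b) leaves the error term $O(\varphi(u(n+\delta)))$ untouched, since the exponent $\delta$ there is produced inside the proof of part (b) and depends only on the processes, not on the reduction, and it yields the claimed asymptotic identity.

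There is no genuine analytic obstacle here: the corollary is a specialization and the error estimate is imported wholesale from Theorem~\ref{thm1}(b). The only points requiring care are bookkeeping ones. First, one must ensure the ``furthermore'' part is read under enough regularity, since the bare expansion $r_i(\tau)=1-C_i\tau^2+o(\tau^2)$ is too weak to run the second-moment (Rice) estimates behind part (b); the standing stationary assumptions must therefore be understood to include $\mathcal{C}^4$ smoothness of $r_i$ and the non-degeneracy $|r_i(\tau)|<1$ on $(0,T]$. Second, I would carefully verify the normalization in passing from the expansion coefficient $C_i$ to $\mathrm{Var}(X_i'(t))=-r_i''(0)$, since this is exactly the step that fixes the explicit numerical constant multiplying $\sum_{i=1}^n\sqrt{C_i}$.
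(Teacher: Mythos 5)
Your route is exactly the paper's: specialize Theorem~\ref{thm1}, observe that stationarity makes $\sqrt{\mathrm{Var}(X_i'(t))}$ constant so the integral collapses to a factor of $T$, and import the $O(\varphi(u(n+\delta)))$ error term unchanged from part (b). Your bookkeeping remarks are also well taken: the ``furthermore'' part does require the $\mathcal{C}^4$ regularity and the two-sided non-degeneracy $|r_i|<1$ from Theorem~\ref{thm1}(b), which the corollary's hypotheses do not literally contain and which the paper passes over in silence.

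The problem is that the one computation you defer --- ``carefully verify the normalization'' --- is the entire content of the corollary, and carrying it out does not give the displayed constant. Under the stated convention $r_i(\tau)=1-C_i\tau^2+o(\tau^2)$ one has $r_i''(0)=-2C_i$, hence $\mathrm{Var}(X_i'(t))=-r_i''(0)=2C_i$, whereas the paper's one-line proof asserts $\mathrm{Var}(X_i'(t))=C_i$. Substituting the correct value into Theorem~\ref{thm1}(a) gives $\frac{\overline{\Phi}^{n-1}(u)\varphi(u)T}{\sqrt{2\pi}}\sum_i\sqrt{2C_i}=\frac{\overline{\Phi}^{n-1}(u)\varphi(u)T}{\sqrt{\pi}}\sum_i\sqrt{C_i}$, which is larger by a factor $\sqrt{2}$ than the stated bound; the stated form is what one gets under the alternative normalization $r_i(\tau)=1-\tfrac{1}{2}C_i\tau^2+o(\tau^2)$, i.e.\ when $C_i$ denotes the second spectral moment as in Worsley--Friston. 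A sanity check with $n=1$ and the cosine process ($r(t)=\cos t$, so $C=1/2$ and $\lambda_2=1$) shows the Rice formula gives $\E(U_u)=\frac{T}{2\pi}e^{-u^2/2}=\frac{T\varphi(u)\sqrt{C}}{\sqrt{\pi}}$, confirming that $\sqrt{\pi}$, not $\sqrt{2\pi}$, is the constant consistent with the stated expansion (and likewise the classical Pickands value $\mathcal{H}_2=1/\sqrt{\pi}$ for $r=1-t^2+o(t^2)$). So you must actually perform the step you flagged: compute $-r_i''(0)$ from the expansion and then reconcile the factor of $\sqrt{2}$, either by correcting the constant in the conclusion or by adopting the spectral-moment normalization of $C_i$ in the hypothesis. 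As written, your proposal establishes the corollary only up to this undetermined universal factor.
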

The proof follows from the fact that $\mbox{Var}(X'_i(t))=C_i$. It is clear that our bound coincides with the heuristic approximation given by the Euler characteristic method.  Furthermore, using the fact that for positive $u$,
$$\frac{\varphi(u)}{u}>\overline{\Phi}(u)>\frac{\varphi(u)}{u}-\frac{\varphi(u)}{u^3},$$
we deduce the explicit value of the generalized Pickands constant $H_{C_1,\ldots,C_n}$.
\begin{corollary} Under the stationary condition, we have
$$H_{C_1,\ldots,C_n}=\frac{1}{\sqrt{2\pi}} \sum_{i=1}^n\sqrt{C_i}.$$
\end{corollary}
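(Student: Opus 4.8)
The plan is to match the two-term expansion with exponentially small error furnished by the previous corollary against the defining asymptotic of $H_{C_1,\ldots,C_n}$, so that the entire argument reduces to extracting the leading order in $u$ of each piece as $u\to\infty$. First I would convert the elementary bounds $\frac{\varphi(u)}{u}-\frac{\varphi(u)}{u^3}<\overline{\Phi}(u)<\frac{\varphi(u)}{u}$ into the single statement $\overline{\Phi}(u)=\frac{\varphi(u)}{u}\bigl(1+O(u^{-2})\bigr)$, which is all that is required to expand powers of $\overline{\Phi}(u)$.

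Next I would substitute this into the two summands of the corollary. The first term becomes $\overline{\Phi}^n(u)=\frac{\varphi^n(u)}{u^n}\bigl(1+O(u^{-2})\bigr)$, of order $\varphi^n(u)/u^n$, while the second term becomes
$$\frac{\overline{\Phi}^{n-1}(u)\varphi(u)T}{\sqrt{2\pi}}\sum_{i=1}^n\sqrt{C_i}=\frac{T}{\sqrt{2\pi}}\sum_{i=1}^n\sqrt{C_i}\cdot\frac{\varphi^n(u)}{u^{n-1}}\bigl(1+O(u^{-2})\bigr),$$
of order $\varphi^n(u)/u^{n-1}$. Since the second term exceeds the first by a factor $u$, I expect the second term to dominate, so the leading behaviour of the conjunction probability should be $\frac{T}{\sqrt{2\pi}}\bigl(\sum_{i}\sqrt{C_i}\bigr)\varphi^n(u)/u^{n-1}$.

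The one point that will require care is checking that the remainder $O(\varphi(u(n+\delta)))$ is genuinely negligible against this leading term. Writing $\varphi(u)=\frac{1}{\sqrt{2\pi}}e^{-u^2/2}$, I would compare $\varphi(u(n+\delta))$ with $\varphi^n(u)/u^{n-1}$ through the ratio $\asymp u^{n-1}e^{-\frac{u^2}{2}((n+\delta)^2-n)}$, and observe that $(n+\delta)^2-n=n(n-1)+2n\delta+\delta^2>0$ for every $n\ge1$ and every $\delta>0$, so the exponential decay overwhelms the polynomial factor. The first term, being $\varphi^n(u)/u^n=(1/u)\cdot\varphi^n(u)/u^{n-1}$, is also $o(\varphi^n(u)/u^{n-1})$. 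Thus both lower-order contributions disappear relative to the leading term, yielding
$$\PP\!\left(\underset{t\in[0,T]}{\sup}\ \underset{1\le i\le n}{\min}X_i(t)\ge u\right)=\frac{T}{\sqrt{2\pi}}\sum_{i=1}^n\sqrt{C_i}\cdot\frac{\varphi^n(u)}{u^{n-1}}\,(1+o(1)).$$

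Finally I would compare this with the definition $\PP(\cdots>u)=H_{C_1,\ldots,C_n}\,T\,\varphi^n(u)u^{-(n-1)}(1+o(1))$; passing from $\ge u$ to $>u$ alters the probability only by the boundary event $\{\sup\min_i X_i=u\}$, which does not affect the leading asymptotic, so cancelling the common factor $T\varphi^n(u)/u^{n-1}$ on both sides gives $H_{C_1,\ldots,C_n}=\frac{1}{\sqrt{2\pi}}\sum_{i=1}^n\sqrt{C_i}$. The main obstacle is precisely the exponent bookkeeping that certifies the error is of smaller order; once the inequality $(n+\delta)^2>n$ is in hand, the rest is a routine comparison of powers of $u$.
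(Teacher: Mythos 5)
Your proposal is correct and follows essentially the same route the paper intends: it takes the two-term expansion with exponentially small error from the preceding corollary, uses the stated Mills-ratio bounds to identify the dominant term $\frac{T}{\sqrt{2\pi}}\bigl(\sum_i\sqrt{C_i}\bigr)\varphi^n(u)/u^{n-1}$, and matches it against the defining asymptotic of $H_{C_1,\ldots,C_n}$. You merely make explicit the exponent bookkeeping (that $(n+\delta)^2>n$ renders the remainder negligible) which the paper leaves implicit.
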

\section{Conjunction probability of correlated processes}
In this section, we consider the conjunction probability of two correlated processes. To be precise, as in \cite[Section 5.4]{MR2206344}, let us consider $X,Y$ be independent copies of a stationary smooth centered Gaussian process with unit variance. Then for a fixed constant $\rho \in (-1,1)$, we define two correlated processes
\begin{equation}\label{corrl}
\begin{cases}
X_1=X\\
X_2=\rho X+\sqrt{1-\rho^2} Y,
\end{cases}
\end{equation}
and we are interested in the conjunction probability 
$$\PP \left(\underset{t\in [0,T]}{\max} \min \{ X_1(t), X_2(t)\} \geq u \right).$$
By the same method as for the independent processes, we can derive an upper bound. Unfortunately, we could not prove the sharpness of the given bound. We leave this question for future research.
\begin{theorem}
Let $X_1$ and $X_2$ be two correlated processes defined as in (\ref{corrl}). Then for every positive $u$,
\begin{align*}
\PP \left(\underset{t\in [0,T]}{\max} \min \{ X_1(t), X_2(t)\} \geq u \right)  \leq & 2\int_u^{\infty} \varphi(x) \overline{\Phi}\left( \sqrt{\frac{1-\rho}{1+\rho}}x\right)dx\\
& +2T\varphi(u) \frac{\sqrt{\mbox{Var}(X'_1(0))}}{\sqrt{2\pi}} \overline{\Phi}\left( \sqrt{\frac{1-\rho}{1+\rho}} u\right). 
\end{align*}

\end{theorem}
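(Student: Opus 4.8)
The plan is to transplant the argument of part (a) to the correlated setting, the only two new ingredients being a conditional‑independence observation that lets the Rice integrand be evaluated despite the correlation, and a symmetry identity for the ``corner'' probability. As before, I would write
$$\PP \left(\underset{t\in [0,T]}{\max} \min \{ X_1(t), X_2(t)\} \geq u \right) = \PP(X_1(0)\geq u, X_2(0)\geq u) + \PP\left(\{\exists i: X_i(0)<u\}\cap\{\sup\min\geq u\}\right),$$
and bound the second summand by $\PP(U^*_u>0)\le \E(U_{1,u})+\E(U_{2,u})$, where $U_{i,u}$ counts the times $t$ at which $X_i$ up‑crosses $u$ while the other coordinate is strictly above $u$. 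For the union bound to capture $U^*_u$ I need the analogue of Lemma \ref{le2} for $X_1,X_2$: since $\{X_1(t)=X_2(t)=u\}=\{X(t)=u,\ Y(t)=u(1-\rho)/\sqrt{1-\rho^2}\}$ and $X,Y$ are independent, the same Rice‑formula argument (with the two independent processes aimed at two distinct fixed values) gives probability $0$, so simultaneous boundary hits may be ignored.

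For the crossing terms I would apply the Rice formula to $\E(U_{1,u})$, namely integrate $\E(\max\{X_1'(t),0\}\,\mathbb{I}_{\{X_2(t)>u\}}\mid X_1(t)=u)\,p_{X_1(t)}(u)$ over $[0,T]$. The decisive point is that, conditionally on $X_1(t)=u$, the derivative $X_1'(t)$ and the value $X_2(t)=\rho u+\sqrt{1-\rho^2}\,Y(t)$ are independent: stationarity forces $X_1'(t)\perp X_1(t)$, and $Y(t)$ is independent of the pair $(X_1(t),X_1'(t))$. Hence the conditional expectation factors into $\E(\max\{X_1'(t),0\})=\sqrt{\mbox{Var}(X_1'(t))}/\sqrt{2\pi}$ times $\PP(X_2(t)>u\mid X_1(t)=u)=\overline{\Phi}\!\left(\tfrac{u(1-\rho)}{\sqrt{1-\rho^2}}\right)=\overline{\Phi}\!\left(\sqrt{\tfrac{1-\rho}{1+\rho}}\,u\right)$. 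Multiplying by $p_{X_1(t)}(u)=\varphi(u)$ and using stationarity yields $\E(U_{1,u})=T\varphi(u)\,\sqrt{\mbox{Var}(X_1'(0))}/\sqrt{2\pi}\cdot\overline{\Phi}(\sqrt{(1-\rho)/(1+\rho)}\,u)$. Because $(X_1,X_2)$ and $(X_2,X_1)$ have the same law (equal marginal covariances $r$, symmetric cross‑covariance $\rho r$, and $\mbox{Var}(X_2'(0))=\mbox{Var}(X_1'(0))$), the term $\E(U_{2,u})$ equals the same quantity, which is exactly the factor $2$ in the second summand of the asserted bound.

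For the corner term I would compute $\PP(X_1(0)\geq u, X_2(0)\geq u)$ exactly by exchangeability: the vector $(X_1(0),X_2(0))$ is bivariate normal with unit variances and correlation $\rho$, so the events $\{X_1(0)\le X_2(0)\}$ and $\{X_1(0)>X_2(0)\}$ split the corner probability into two equal halves, giving $2\,\PP(u\le X_1(0)\le X_2(0))$. Conditioning on $X_1(0)=x$ gives $\PP(X_2(0)\ge x\mid X_1(0)=x)=\overline{\Phi}\!\left(\tfrac{(1-\rho)x}{\sqrt{1-\rho^2}}\right)=\overline{\Phi}\!\left(\sqrt{\tfrac{1-\rho}{1+\rho}}\,x\right)$, whence the corner term equals $2\int_u^\infty\varphi(x)\,\overline{\Phi}(\sqrt{(1-\rho)/(1+\rho)}\,x)\,dx$. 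Adding the corner term to $\E(U_{1,u})+\E(U_{2,u})$ reproduces the claimed inequality.

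The crossing part follows the template of part (a) almost verbatim; the genuinely new steps are the conditional factorization of the Rice integrand and the exchangeability identity, and I expect the former to be the main thing to justify with care, since — unlike the independent case — the integrand no longer factors across processes and the argument leans entirely on stationarity together with the explicit linear representation $X_2=\rho X+\sqrt{1-\rho^2}\,Y$. I would not attempt the matching lower bound here: the second‑moment and Bonferroni estimates of part (b) factored products of maxima using independence, and under genuine correlation that factorization breaks, which is presumably why only the upper bound is asserted.
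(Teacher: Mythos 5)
Your proposal is correct and follows essentially the same route as the paper: the same decomposition into the corner probability plus two crossing terms, the same Rice--formula evaluation using the independence of $X_1'(t)$ from $(X_1(t),X_2(t))$ and the Gaussian regression $\PP(X_2(t)>u\mid X_1(t)=u)=\overline{\Phi}\bigl(\sqrt{(1-\rho)/(1+\rho)}\,u\bigr)$, and the same orthant-probability formula (which the paper simply cites from Aza\"{i}s--Wschebor, whereas you rederive it by exchangeability). Your explicit verification of the Lemma~\ref{le2} analogue via the representation $X_2=\rho X+\sqrt{1-\rho^2}\,Y$ is a detail the paper leaves implicit, and it is handled correctly.
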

\begin{proof}
As in the proof of Theorem \ref{thm1}, we have the upper bound.
\begin{align*}
&\PP \left(\underset{t\in [0,T]}{\max} \min \{ X_1(t), X_2(t)\} \geq u \right)\\
\leq & \PP (X_1(0)\geq u, X_2(0)\geq u)+\PP (\exists t\in [0,T]:\, X_1(t)=u,X'_1(t)\geq 0, \,\mbox{and} \, X_2(t)> u )\\
& +\PP (\exists t\in [0,T]:\, X_2(t)=u,X'_2(t)\geq 0, \,\mbox{and} \, X_1(t)> u )\\
\leq & \PP (X_1(0)\geq u, X_2(0)\geq u)+\E (\mbox{card}\{ t\in [0,T]:\, X_1(t)=u,X'_1(t)\geq 0, \,\mbox{and} \, X_2(t)> u\} )\\
&+\E (\mbox{card}\{ t\in [0,T]:\, X_2(t)=u,X'_2(t)\geq 0, \,\mbox{and} \, X_1(t)> u\} ).
\end{align*}
It is easy to check that  (see also \cite[page 101]{MR2478201}) if 
$$(X_1(0),X_2(0)) \sim \mathcal{N}\left( 0, \begin{pmatrix}
1 & \rho \\
\rho & 1
\end{pmatrix} \right),$$
then
$$\PP (X_1(0)\geq u, X_2(0)\geq u) =2\int_u^{\infty} \varphi(x) \overline{\Phi}\left( \sqrt{\frac{1-\rho}{1+\rho}}x\right)dx.$$
By the Rice formula,
\begin{align*}
&\E (\mbox{card}\{ t\in [0,T]:\, X_1(t)=u,X'_1(t)\geq 0, \,\mbox{and} \, X_2(t)> u\} ) \\
=& \int_0^T \E(\max\{X_1'(t),0\}\mathbb{I}_{\{X_2(t)>u\}} \mid X_1(t)=u) p_{X_1(t)}dt\\
=& T \varphi(u) \E (\max\{X_1'(t),0\}) \PP(X_2(t)>u\mid X_1(t)=u)\\
=&T\varphi(u) \frac{\sqrt{\mbox{Var}(X'_1(0))}}{\sqrt{2\pi}}\PP(u\rho +\sqrt{1-\rho^2}Z>u)= T\varphi(u) \frac{\sqrt{\mbox{Var}(X'_1(0))}}{\sqrt{2\pi}} \overline{\Phi}\left( \sqrt{\frac{1-\rho}{1+\rho}} u\right),
\end{align*}
where in the second line, we use the stationary property and the fact that $X_1'(t)$ is independent of $X_1(t)$ and $X_2(t)$, and in the last line by the Gaussian regression of $X_2(t)$ under the condition $X_1(t)=u$, the random variable $Z$ has standard normal distribution.

The expectation $\E (\mbox{card}\{ t\in [0,T]:\, X_2(t)=u,X'_2(t)\geq 0, \,\mbox{and} \, X_1(t)> u\} )$ can be computed similarly. Taking the sum, we obtain the upper bound.
\end{proof}

\begin{acknowledgement} 
Part of this work was done during the author's post-doctoral fellowship of the Vietnam Institute for Advanced Study in Mathematics in 2016. This work is also funded by Vietnam National Foundation for Science and Technology Development (NAFOSTED) under grant number 101.03-2017.316.
\end{acknowledgement}


\begin{thebibliography}{99} 
\bibitem{MR2775212}{Alodat, M.T.: An approximation to cluster size distribution of two Gaussian random fields conjunction with application to FMRI data. J. Statist. Plann. Inference 141 (2011), no. 7, 2331--2347.}

\bibitem{MR3473099}{Aza\"{i}s, J.M.; Pham, V.H.: Asymptotic formula for the tail of the maximum of smooth stationary Gaussian fields on non locally convex sets. Stochastic Process. Appl. 126 (2016), no. 5, 1385--1411.}


\bibitem{MR2478201}{Aza\"{i}s, J.M., Wschebor, M. : Level sets and extrema of random processes and fields. John Wiley \& Sons, Inc., Hoboken, NJ, 2009.}
\bibitem{MR3178344}{Debicki,K., Hashorva, E., Ji, L., Tabis, K.:  On the probability of conjunctions of stationary Gaussian processes. Statist. Probab. Lett. 88 (2014), 141--148.}
\bibitem{MR3385594}{Debicki,K., Hashorva, E., Ji, L., Tabis, K.:  Extremes of vector-valued Gaussian processes: exact asymptotics. Stochastic Process. Appl. 125 (2015), no. 11, 4039--4065.}
\bibitem{MR1361884}{Piterbarg, V.I.: Asymptotic methods in the theory of Gaussian processes and fields, volume 148 of Translations of Mathematical Monographs. American Mathematical Society, Providence, RI, 1996. Translated from the Russian by V. V. Piterbarg, Revised by the author.}
\bibitem{MR2206344}{Taylor, J.E.: A Gaussian kinematic formula. Ann. Probab. 34 (2006), no. 1, 122--158.}
\bibitem{MR2150192}{Taylor, J., Takemura, A., Adler, R.J.: Validity of the expected Euler characteristic heuristic. Ann. Probab. 33 (2005), no. 4, 1362--1396.}
\bibitem{MR2005444}{Worsley, K.J., Friston, K.J.: A test for a conjunction. Statist. Probab. Lett. 47 (2000), no. 2, 135--140.}


\end{thebibliography}
\end{document}